\newtheorem{Thm}{Theorem}{\bfseries}{\itshape}
\newtheorem{Cor}{Corollary}{\bfseries}{\itshape}
\newtheorem{Prop}[Cor]{Proposition}{\bfseries}{\itshape}
\newtheorem{Lem}[Cor]{Lemma}{\bfseries}{\itshape}
{\bfseries}{\itshape}
\newtheorem{Def}[Cor]{Definition}{\bfseries}{\rmfamily}
{\scshape}{\rmfamily}
{\scshape}{\rmfamily}
 \renewcommand\le{\leqslant}
\let\tildeaccent=\~ \let\hataccent=\^
\renewcommand\~[1]{\widetilde{#1}} \renewcommand\^[1]{\widehat{#1}}
\def\<{\left<} \def\>{\right>} \def\({\left(} \def\){\right)}
\let\subsetneq=\subsetneqq 
\let\polishL=l \def\Zoladek.{\.Zol\c adek}
 \def\etc.{\emph{etc}.}
\def\:{\colon}  \def\C{{\mathbb C}}
 \def\d{\,\mathrm d}
 \let\PolishL=\L \def\Lojas.{\PolishL ojasiewicz}
\def\L{\varLambda}  
\def\cF{{\mathcal F}} \def\cL{{\mathcal L}} 
\def\cI{{\mathcal I}} \def\cJ{{\mathcal J}} \def\cK{{\mathcal K}}
\def\cO{{\mathcal O}}
 \def\mult{\operatorname{mult}}
\def\V{V} \def\trans{\pitchfork}
\def\rest#1{{\vert_{#1}}} \def\onL{\rest{\cL}}
\def\LT{\operatorname{LT}}
\begin{document}

% +Title
\title{Intersection multiplicities of Noetherian functions}
\author{Gal Binyamini, Dmitry Novikov} \address{Weizmann Institute of
  Science\\Rehovot\\Israel}

\begin{abstract}We provide a partial answer to the following problem:
  \emph{give an effective upper bound on the multiplicity of
    non-isolated common zero of a tuple of Noetherian functions}. More
  precisely, consider a foliation defined by two commuting polynomial
  vector fields $V_1,V_2$ in $\C^n$, and $p$ a nonsingular point of
  the foliation. Denote by $\cL$ the leaf passing through $p$, and let
  $F,G\in\C[X]$ be two polynomials. Assume that $F\onL=0,G\onL=0$ have
  several common branches. We provide an effective procedure which
  allows to bound from above multiplicity of intersection of
  remaining branches of $F\onL=0$ with $G\onL=0$ in terms of the
  degrees and dimensions only.
\end{abstract}
%% -Title
\maketitle
\date{\today}

\section{Introduction}
Let $\cF$ be the foliation generated in $X=\C^n$ by several commuting
polynomial vector fields $V_1,..., V_k$. A restriction of a polynomial
$F\in \C[X]$ to a leaf of $\cF$ is called a \emph{Noetherian
  function}.  The main result of this paper is motivated by the
following question: \emph{is it possible to effectively bound the
  topological complexity of objects defined by Noetherian functions
  solely in terms of discrete parameters of the defining functions
  (i.e. dimension of spaces, number and degrees of vector fields and
  of polynomials)}?

An important subclass of the class of Noetherian functions is that of
Pfaffian functions.  The theory of Fewnomials developed by Khovanskii
provides effective upper bounds for global topological invariants
(e.g. Betti numbers) of \emph{real} varieties defined by Pfaffian
equations (see~\cite{Kho:Few}).  Evidently, for complex varieties such
bounds are impossible: the only holomorphic functions which admit
``finite complexity'' (e.g., finitely many zeros) in the entire
complex domain are the polynomials.  One may trace the dichotomy
between the real and complex settings to the absence of a complex
analogue of the Rolle theorem (which is a cornerstone of the real
Fewnomial theory) --- the derivative of a holomorphic function with
many zeros may have no zeros at all.

While the global Rolle theorem fails in the complex setting, certain
local analogs still hold. Perhaps the simplest of these analogs is the
statement that if a derivative $f'$ admits a zero of multiplicity $n$
at some point, then $f$ may admit a zero of multiplicity at most $n+1$
at the same point.  This trivial claim, and ramifications thereof, can
be used to build local theory of complex Pfaffian sets, and to provide
effective estimates on the \emph{local} complexity of complex Pfaffian
sets \cite{Gab:Loj}.

The topology of global real Noetherian sets is usually infinite, as
demonstrated by the simple Noetherian (but non-Pfaffian) function
$\sin(x)$, which admits infinitely many zeros.  However, a long
standing conjecture due to Khovanskii claims that the complexity of
the local topology of such functions can be estimated through the
discrete parameters of the set. This conjecture is motivated as
follows: in view of Morse theory, to estimate local Betti numbers it
is essentially sufficient to bound the number of critical points of
Noetherian functions on germs of Noetherian sets. The latter can be
bounded through a suitably defined multiplicity of a common zero of a
suitable tuple of Noetherian functions. In \cite{GabKho} the
multiplicity of an \emph{isolated} common zero is bounded from above.
To build the general theory one has to generalize this result to
non-isolated intersections.

For non-isolated intersections even the notion of multiplicity becomes
non-trivial. For a point $p$ lying on a leaf $\cL_0$ of the foliation
$\cF$ one can define the multiplicity of the common zero $p$ of the
functions $F_i|_{\cL_0}$ as the number of common isolated zeros of
$F_i|_{\cL}$ on neighboring leaves $\cL$ converging to $p$ as
$\cL\to\cL_0$. The multiplicity defined in this manner is not
intrinsic to the leaf $\cL$. It depends on the foliation $\cF$ in
which $\cL$ is embedded.

In this paper we restrict attention to the case of foliations with
two-dimensional leaves. In this context, we consider another notion of
non-isolated multiplicity, suggested by Gabrielov, which is defined
intrinsically on the leaf being considered (see
subsection~\ref{subsec:non-isolated-mult}).  We prove that this
multiplicity can be explicitly bounded in terms of the dimension $n$
and the degrees of the vector fields defining the foliation.

We would like to thank A. Gabrielov, A. Khovanskii and P. Milman for many fruitful discussions. Our special thanks go to S. Yakovenko for continuous help and advice from the very beginning.

% Let us formulate the main result. Let $\cF$ be the foliation
% generated in $X=\C^n$ by two commuting polynomial vector fields
% $V_1, V_2$. Assume that the leaf $\cL$ passing through $p\in X$ is
% smooth near $p$.  Let $F,G\in \C[X]$ be two polynomials, and assume
% that locally, near $p$, $F|_{\cL}=h_f f$ and $G|_{\cL}=h_g g$, where
% $f,g$ are two functions on $\cL$ holomorphic near $p$ and mutually
% prime in the ring $\cO_p(\cL)$ of germs at $p$ of functions
% holomorphic on $\cL$.
% \begin{theorem}\label{thm:main}
%   The multiplicity $\dim_\C \cO_p(\cL)/<f,g>$ can be bounded from
%   above in terms of degres of $V_1,V_2,F,G$ and the dimension $n$.
% \end{theorem}

\section{Setup and notation}

Let $\cF$ be the foliation generated in $X=\C^n$ by two commuting
polynomial vector fields $V_1, V_2$. Given two polynomial functions
$F,G$, we denote by $\{F,G\}$ the Poisson bracket of $F$ and $G$ with
respect to the leafs of the foliation,
\begin{equation}
  \{f,g\} = V_1(f) V_2(g)-V_2(f) V_1(g)
\end{equation}

In the entire paper $\cL$ denotes some particular fixed leaf of the
foliation, and $p\in\cL$ a particular fixed smooth point of $\cL$.  We will denote
functions defined on $X$ using capital letters, and functions defined
on $\cL$ by small letters. We will also denote ideals of functions on
$X$ by capital letter, and ideals of functions on $\cL$ by
calligraphic letters.

We denote by $\cO(X)$ the ring of polynomial functions on $X$, and by
$\cO_p(\cL)$ the ring of germs of analytic functions on $\cL$ at $p$.
Given an ideal $\cI\subset\cO_p(\cL)$ we denote by $\mult_p \cI$ the
dimension $\dim_\C \cO_p(\cL)/\cI$.  We also denote by $\V(I)$ the
variety associated to $I$.

\section{Noetherian pairs and controllable inclusions}

In studying the restriction of algebraic functions to the leafs of a
foliation we shall often find it necessary to simultaneously keep
track of functions defined locally on a particular leaf, and their
algebraic counterparts defined globally. In this section we introduce
notation and terminology to facilitate the manipulation of such data.

\begin{Def}
  A pair of ideals $(I,\cI)$ with
  $I\subset \cO(X), \cI\subset\cO_p(\cL)$ is called a \emph{noetherian
    pair} for the leaf $\cL$ if $I\onL \subset\cI$.
\end{Def}

By an inclusion of pairs $(I,\cI)\subset(J,\cJ)$ we mean simply that
$I\subset J,\cI\subset\cJ$. The pair $(J,\cJ)$ is said to
\emph{extend} the pair $(I,\cI)$.

We will introduce a number of operations which generate for a given
pair $(I,\cI)$ an extension $(J,\cJ)$. The goal is to form an
extension in such a way that the multiplicity of $\cI$ can be
estimated from that of $\cJ$. More precisely we introduce the
following notion.

\begin{Def}
  An inclusion $(I,\cI)\subset(J,\cJ)$ is said to be
  \emph{controllable} if:
  \begin{itemize}
  \item The complexity of $J$ can be bounded in terms of the
    complexity of $I$.
  \item The multiplicity of $\cI$ can be bounded in terms of the
    multiplicity of $\cJ$.
  \end{itemize}
\end{Def}

The precise estimates on the complexity and the multiplicity in the
definition above will vary for the various types of inclusions we
form.

We record two immediate consequences of this definition.

\begin{Prop} \label{prop:inclusion-sequence} If each inclusion in the
  sequence
  \begin{equation}
    (I_0,\cI_0)\subset\cdots\subset(I_k,\cI_k)
  \end{equation}
  is controllable, then the inclusion $(I_0,\cI_0)\subset(I_k,\cI_k)$
  is controllable.
\end{Prop}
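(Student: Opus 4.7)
The plan is to argue by induction on the length $k$ of the chain, leveraging the fact that ``controllability'' is really a conjunction of two one-sided estimates: a forward estimate for the complexity of the algebraic ideal and a backward estimate for the multiplicity of the leafwise ideal. Each estimate is individually transitive, so composing them across a chain should immediately yield a controllable inclusion at the endpoints.

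More precisely, for the base case $k=1$ there is nothing to prove. Assume by induction that the inclusion $(I_0,\cI_0)\subset(I_{k-1},\cI_{k-1})$ is controllable, so we have a bound on $\mathrm{complexity}(I_{k-1})$ in terms of $\mathrm{complexity}(I_0)$ and a bound on $\mult_p\cI_0$ in terms of $\mult_p\cI_{k-1}$. By hypothesis, the inclusion $(I_{k-1},\cI_{k-1})\subset(I_k,\cI_k)$ is also controllable, so we have a bound on $\mathrm{complexity}(I_k)$ in terms of $\mathrm{complexity}(I_{k-1})$ and a bound on $\mult_p\cI_{k-1}$ in terms of $\mult_p\cI_k$. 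Composing the two complexity bounds expresses $\mathrm{complexity}(I_k)$ as a function of $\mathrm{complexity}(I_0)$; composing the two multiplicity bounds in the opposite direction expresses $\mult_p\cI_0$ as a function of $\mult_p\cI_k$. Both composed functions are effective because each link in the chain is.

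The only point that deserves a word of caution is the compatibility of the parameter systems: the various ``controllable'' inclusions in practice produce bounds whose precise form depends on the operation used (e.g.\ adding a Poisson bracket, saturating, etc.), as noted in the remark preceding Proposition~\ref{prop:inclusion-sequence}. Consequently the composite bound is not uniform across all sequences, but for any fixed sequence of operations the composition of the individual effective bounds is again an effective bound, which is all that is required. This makes the statement of the proposition essentially a bookkeeping lemma, and I expect no serious obstacle; the only thing to do is to record the induction carefully so that the reader understands that the two estimates in the definition of controllability travel in opposite directions along the chain.
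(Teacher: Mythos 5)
Your inductive composition of the forward complexity bounds and backward multiplicity bounds is exactly the right argument; the paper states this proposition as an ``immediate consequence'' of the definition and gives no proof, so you have simply spelled out the bookkeeping it leaves to the reader. Your cautionary remark about the composite bound depending on the particular sequence of operations is also consistent with the paper's remark that the precise estimates vary by inclusion type.
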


\begin{Prop} \label{prop:inclusion-mult-bound} Suppose
  $(I,\cI)\subset(J,\cJ)$ is an controllable inclusion, and
  $p\not\in V(J)$. Then one can give an upper bound for $\mult_p\cI$.
\end{Prop}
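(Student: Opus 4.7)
The plan is essentially tautological: the statement reduces immediately to the two bullets in the definition of a controllable inclusion, together with the Noetherian pair axiom $J\onL\subset\cJ$. The point of the proposition is to record, as a general principle, that once one has extended $(I,\cI)$ through a chain of controllable inclusions (cf.\ Proposition~\ref{prop:inclusion-sequence}) to a pair whose global ideal no longer vanishes at $p$, one automatically obtains a quantitative bound on $\mult_p\cI$.

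First I would use the assumption $p\not\in\V(J)$ to produce some $F\in J$ with $F(p)\neq 0$. Since $(J,\cJ)$ is a Noetherian pair, $f:=F\onL$ lies in $\cJ$; and because $f(p)=F(p)\neq 0$, the element $f$ is a unit in the local ring $\cO_p(\cL)$. Consequently $\cJ=\cO_p(\cL)$, and therefore $\mult_p\cJ=\dim_\C\cO_p(\cL)/\cJ=0$.

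Second, I would invoke the definition of controllable inclusion. Its second bullet bounds $\mult_p\cI$ in terms of $\mult_p\cJ$, and substituting the value $0$ just obtained yields a definite upper estimate. Any latent dependence of this bound on the complexity of $J$ is converted into a dependence on the complexity of $I$ via the first bullet of the definition. There is no genuine obstacle here; the entire technical content of the paper lies in constructing the concrete controllable inclusions to be used in such chains, not in the formal deduction carried out above.
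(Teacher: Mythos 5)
Your argument is correct and is essentially identical to the paper's own proof: both deduce $\mult_p\cJ=0$ from the Noetherian pair condition $J\onL\subset\cJ$ together with $p\notin\V(J)$ (you merely make explicit the step that a non-vanishing element of $\cJ$ is a unit in $\cO_p(\cL)$), and then invoke the second bullet of the definition of a controllable inclusion. No further comment is needed.
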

\begin{proof}
  By definition, one can give an upper bound for $\mult_p(\cI)$ in
  terms of $\mult_p(\cJ)$. But $J\subset\cJ$ and $p\not\in V(J)$, so
  $\mult_p\cJ=0$ and the proposition follows.
\end{proof}

These two proposition lay out the general philosophy of this paper.
We start with a pair $(I,\cI)$ and attempt, by forming a sequence of
controllable inclusions, to reach a pair $(J,\cJ)$ with $V(J)$ as
small as possible. If we manage to get a pair with $p\not\in V(J)$
then we obtain an upper bound for $\mult_p\cI$.

We now introduce the three controllable inclusions which will be used
in this paper.

\subsection{Radical extension}

Given a pair $(I,\cI)$ we define a new pair $(J,\cJ)$ by letting
$J=\sqrt{I}$ and $\cJ=\<\cI,J\onL\>$. The complexity of the ideal $J$
can be estimated from that of $I$ using effective radical extraction
algorithms. The multiplicity of $\cI$ can be estimated from that of
$\cJ$ by combining the effective Nullstellensatz with the following
simple lemma.

\begin{Lem}
  Let $\cK\subset\cO_p(\cL)$ be an ideal of finite multiplicity, and
  suppose $f^n\in\cK$.  Then
  \begin{equation}
    \mult\cK \le n\mult \<\cK,f \>
  \end{equation}
\end{Lem}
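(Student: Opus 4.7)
The plan is to exploit the condition $f^n \in \cK$ by filtering $R/\cK$ (where $R = \cO_p(\cL)$) via successive powers of $f$, and then to bound each successive quotient by $\mult \langle \cK, f \rangle$. The whole argument is a finite-dimensional linear algebra count.

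Concretely, I would set $M_i = \cK + (f^i) \subset R$ for $i = 0, 1, \dots, n$. Because $f^n \in \cK$, the two endpoints of the chain collapse nicely: $M_0 = R$ and $M_n = \cK$. This produces a decreasing sequence
\begin{equation}
  \cK = M_n \subset M_{n-1} \subset \cdots \subset M_1 \subset M_0 = R,
\end{equation}
and the additivity of codimension along a filtration gives
\begin{equation}
  \mult \cK = \dim_\C R/\cK = \sum_{i=0}^{n-1} \dim_\C M_i / M_{i+1}.
\end{equation}

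The main step is then to verify that each graded piece is controlled by $\mult \langle \cK, f\rangle$. For this, I would consider the \emph{multiplication by $f^i$} map $R \to M_i/M_{i+1}$, $r \mapsto r f^i \bmod M_{i+1}$. Since any element of $M_i = \cK + (f^i)$ is congruent mod $\cK \subset M_{i+1}$ to something of the form $r f^i$, this map is surjective. Moreover its kernel contains $M_1 = \langle \cK, f\rangle$: if $r = k + s f$ with $k \in \cK$, then $r f^i = k f^i + s f^{i+1} \in \cK + (f^{i+1}) = M_{i+1}$. Hence the map factors through a surjection $R / \langle \cK, f\rangle \twoheadrightarrow M_i / M_{i+1}$, giving $\dim_\C M_i / M_{i+1} \le \mult \langle \cK, f \rangle$ for each $i = 0, \dots, n-1$.

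Substituting this bound into the sum above yields $\mult \cK \le n \cdot \mult \langle \cK, f\rangle$, which is the desired inequality. There is no serious obstacle here: the only point that needs a quick sanity check is that all dimensions in sight are finite, but this is immediate from the hypothesis that $\mult \cK < \infty$, which forces $\mult \langle \cK, f\rangle \le \mult \cK < \infty$ as well.
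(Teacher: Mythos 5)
Your proof is correct and is essentially the paper's argument made explicit: the paper's one-line claim that the products $h_if^j$ ($0\le j\le n-1$) of coset representatives $h_i$ of $\cO_p(\cL)/\<\cK,f\>$ span $\cO_p(\cL)/\cK$ is exactly the content of your filtration $M_i=\cK+(f^i)$ together with the surjections $R/\<\cK,f\>\twoheadrightarrow M_i/M_{i+1}$ induced by multiplication by $f^i$. All steps check out, including the endpoint identifications $M_0=R$, $M_n=\cK$ and the finiteness remark.
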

\begin{proof}
  Let $\{h_1,\ldots,h_k\}$ generate the local algebra
  $\cO_p(\cL)/\<\cK,f\>$. Then a simple computation shows that
  $\{h_if^j\}_{1\le i\le k,0\le j\le n-1}$ generate $\cO_p(\cL)/\cK$.
\end{proof}

More generally, let $\cK\subset\cO_p(\C^m)$ be an ideal of finite
multiplicity in the ring of germs at zero of holomorphic functions on
$\C^m$ .
\begin{Lem}
  Assume that $\cK'\supset\cK^n$. Then
  \begin{equation}
    \mult\cK'\le n^m  \mult\cK
  \end{equation}
\end{Lem}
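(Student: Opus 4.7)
The plan is to prove the stronger inequality $\mult\cK^n \le n^m \mult\cK$, from which the stated bound follows since $\cK'\supset\cK^n$ gives $\mult\cK'\le\mult\cK^n$. I would attack this by flat degeneration to the monomial case.

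Concretely, fix a local monomial order on $\cO_p(\C^m)$ and let $\mathrm{in}(\cdot)$ denote the initial ideal. Local standard-basis theory supplies two ingredients: (a) $\mult I = \mult\mathrm{in}(I)$ for any $\mathfrak{m}$-primary $I$, since the monomials not lying in $\mathrm{in}(I)$ form a $\C$-basis of $\cO_p/I$; and (b) $\mathrm{in}(\cK)^n\subset\mathrm{in}(\cK^n)$, because $\mathrm{in}(f_1\cdots f_n) = \mathrm{in}(f_1)\cdots\mathrm{in}(f_n)$ for $f_1,\dots,f_n\in\cK$ exhibits each generator of the left-hand side as an initial term of an element of $\cK^n$. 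Combining these yields $\mult\cK^n=\mult\mathrm{in}(\cK^n)\le\mult\mathrm{in}(\cK)^n$, reducing the problem to proving $\mult J^n\le n^m\mult J$ for a monomial $\mathfrak{m}$-primary ideal $J$.

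The monomial case is a pigeonhole argument. Write $S=\{\alpha\in\N^m : x^\alpha\notin J\}$ (so $|S|=\mult J$) and likewise $S_n$ for $J^n$. The map $\phi\colon S_n\to\N^m$ defined by $\phi(\alpha)_i=\lfloor\alpha_i/n\rfloor$ must land in $S$: if $\phi(\alpha)$ lay in $\mathrm{supp}(J)$, then $n\phi(\alpha)\in\mathrm{supp}(J^n)$, and since $\mathrm{supp}(J^n)$ is upward-closed and $\alpha\ge n\phi(\alpha)$, this would force $\alpha\in\mathrm{supp}(J^n)$, contradicting $\alpha\in S_n$. Each $\beta\in S$ has at most $n^m$ preimages, namely the vectors $\alpha=n\beta+r$ with $0\le r_i<n$; hence $|S_n|\le n^m|S|$.

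The main technical obstacle is verifying ingredients (a) and (b) in the analytic local setting $\cO_p(\C^m)$, which requires a local (rather than global) monomial order and an appeal to Grauert--Hironaka division. A tempting alternative is induction on $m$ via the preceding lemma applied to a regular parameter $f\in\cK$ (passing to $\cO_p/(f)\cong\cO_p(\C^{m-1})$ and noting $\mult\langle\cK,f\rangle\le\mult\cK$), but this breaks down precisely when $\cK\subset\mathfrak{m}^2$: no element of $\cK$ is then a regular parameter of $\cO_p(\C^m)$, and $\cO_p/(f)$ ceases to be a regular local ring of dimension $m-1$.
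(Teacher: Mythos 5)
Your proof follows essentially the same route as the paper: reduce to the monomial case via initial (leading-term) ideals, using $\mult I = \mult\mathrm{in}(I)$ together with $\mathrm{in}(\cK)^n\subset\mathrm{in}(\cK^n)$, and then treat the monomial case combinatorially. You spell out the pigeonhole argument (the floor map $\alpha\mapsto\lfloor\alpha/n\rfloor$) that the paper dismisses as trivial, and you correctly flag the local-order/Grauert--Hironaka division subtlety in the analytic ring $\cO_p(\C^m)$ which the paper glosses over by citing the polynomial-ring reference; both are welcome refinements, but the underlying argument is the same.
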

\begin{proof}
  For monomial ideals, the claim is a trivial combinatorial statement.
  To prove claim in the general case, let $\LT(K)$ denote the ideal of
  leading terms of $K$ with respect to an arbitrary monomial ordering. It is well known that
  \begin{equation}
    \mult K = \mult \LT(K),
  \end{equation}
  and that
  \begin{equation}
    \LT(K)^n \subset \LT(K^n),
  \end{equation}
see e.g.  \cite{Eis:book}.
  Thus the general claim follows from the case of monomial ideals.
\end{proof}

\subsection{Poisson extension}

Given a pair $(I,\cI)$ and two functions $F,G\in I$ we define a new
pair $(J,\cJ)$ by letting $J=\<I,\{F,G\}\>$ and $\cJ=\<\cI,\{F,G\}\>$.
Assume that complexity of $F,G$ is known, e.g. they are linear combinations of generators of $I$.
We claim that $(J,\cJ)$ is a controllable extension of $(I,\cI)$.
The complexity of the ideal $J$ can clearly be estimated from that of
$I$.

It is well-known that for a map $\Phi:(\C^n,0)\to (\C^n,0)$ of finite multiplicity the Jacobian $J(f)$ generates a one-dimensional ideal in the local algebra $Q_f$, see \cite{AVG}. As a corollary, we conclude that
the multiplicity of $\cI$ can be estimated from that of $\cJ$.
\begin{Lem} \label{lem:poisson-mult} Let $\cK\subset\cO_p(\cL)$ be an
  ideal of finite multiplicity, and suppose $f,g\in\cK$.  Then
  \begin{equation}
    \mult\cK \le \mult \<\cK, \{f,g\} \>+1
  \end{equation}
\end{Lem}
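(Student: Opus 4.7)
The plan is to identify $\{f,g\}$, up to a unit, with the Jacobian determinant of $(f,g)$ on the two-dimensional leaf and then to invoke the quoted AVG result. Fix local analytic coordinates $(x,y)$ on $\cL$ at the smooth point $p$ and write $V_i = a_i\partial_x + b_i\partial_y$; a direct expansion gives $\{f,g\} = (a_1 b_2 - a_2 b_1)(f_x g_y - f_y g_x)$, and $a_1 b_2 - a_2 b_1$ is a unit of $\cO_p(\cL)$ because $V_1,V_2$ are linearly independent at the nonsingular point $p$. Hence $\<\cK,\{f,g\}\>=\<\cK,J\>$ with $J:=f_x g_y - f_y g_x$, and the inequality reduces to showing that the principal ideal generated by $J$ in the finite-dimensional local algebra $Q:=\cO_p(\cL)/\cK$ has $\C$-dimension at most~$1$.

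Suppose first that $\<f,g\>$ itself has finite multiplicity, so $(f,g)\:(\cL,p)\to(\C^2,0)$ is a finite map germ. Then AVG asserts that $J$ generates a $1$-dimensional ideal in $\cO_p(\cL)/\<f,g\>$. Since $\cK\supset\<f,g\>$, the algebra $Q$ is a surjective quotient of $\cO_p(\cL)/\<f,g\>$, so the image of this $1$-dimensional ideal in $Q$ has $\C$-dimension at most~$1$, which yields $\mult\cK\le\mult\<\cK,J\>+1$ as required.

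The remaining case, where $\<f,g\>$ has infinite multiplicity, is the main obstacle and will be handled by a perturbation argument. Since $\cK$ has finite multiplicity at $p$ it contains $x^N$ and $y^N$ for some~$N$; pick $h_1,h_2\in\cK$ with $\<h_1,h_2\>$ of finite multiplicity (e.g.\ $h_1=x^N$, $h_2=y^N$). For $s\in\C$ outside a finite set, the pair $(f+sh_1,g+sh_2)\in\cK\times\cK$ still defines a finite map germ at $p$: finiteness manifestly holds for $|s|$ large (after rescaling by $1/s$ the pair approaches $(h_1,h_2)$), and the bad locus is a constructible subset of $\C$ which does not contain arbitrarily large $s$, hence is finite. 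Applying the previous case to each good $s$ close to~$0$ gives $\mult\cK\le\mult\<\cK,J(f+sh_1,g+sh_2)\>+1$, and as $s\to 0$ the algebraic family of zero-dimensional ideals $\<\cK,J(f+sh_1,g+sh_2)\>$ specializes to $\<\cK,J(f,g)\>$. Upper semicontinuity of multiplicity at $s=0$ then yields $\mult\<\cK,J(f,g)\>\ge\limsup_{s\to 0}\mult\<\cK,J(f+sh_1,g+sh_2)\>$, which combined with the previous inequality completes the proof.
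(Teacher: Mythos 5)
Your argument is correct, and its core coincides with the paper's treatment: the paper offers no proof of this lemma beyond the sentence preceding it, which cites \cite{AVG} for the fact that the Jacobian of a finite map germ generates a one-dimensional ideal in the local algebra. Your first two paragraphs supply exactly the two details that citation leaves implicit --- the identity $\{f,g\}=(a_1b_2-a_2b_1)(f_xg_y-f_yg_x)$ with $a_1b_2-a_2b_1$ a unit because $p$ is a nonsingular point of the foliation, and the passage from $\cO_p(\cL)/\<f,g\>$ to its quotient $Q=\cO_p(\cL)/\cK$ --- and both are right. Your third paragraph treats a case the paper silently ignores, namely $\mult\<f,g\>=\infty$, and the perturbation scheme works; in particular the semicontinuity step is genuinely harmless, since $\mult\<\cK,J_s\>=\mult\cK-\operatorname{rank}(M_{J_s})$, where $M_{J_s}$ is multiplication by $J_s$ on the \emph{fixed} finite-dimensional algebra $Q$, and matrix rank is lower semicontinuous in $s$. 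The one soft spot is the assertion that the bad set of $s$ is finite: ``constructible'' is not immediate for germs of analytic functions, and finiteness of a map germ is not in general an open condition in a parameter, so the ``large $s$'' argument needs care. The claim is nevertheless true, with a short direct proof for your choice $h_1=x^N$, $h_2=y^N$: any common branch $\gamma$ of $f+sx^N$ and $g+sy^N$ through $p$ satisfies $y^N(f+sx^N)-x^N(g+sy^N)=fy^N-gx^N=0$, an equation independent of $s$. If $fy^N-gx^N\not\equiv0$, its germ has finitely many branches, and on a fixed branch the vanishing of $f+sx^N$ (or, for the branch $x=0$, of $g+sy^N$) is an affine condition on $s$ with nonzero leading coefficient, so each branch accounts for at most one bad $s$; if instead $fy^N\equiv gx^N$, then $f=x^Nf_1$ and $g=y^Nf_1$ for some germ $f_1$, the two perturbed functions are $x^N(f_1+s)$ and $y^N(f_1+s)$, and only $s=-f_1(p)$ is bad. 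With that substitution your proof is complete.
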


\subsection{Jacobian extension}
\label{subsec:jacobian-extension}

Let $(I,\cI)$ be a pair and $F\in I$ be of known complexity. Assume that $I$ is a radical
ideal and that all intersections of $I$ and $\cF$ are non-isolated.
Write $F\onL=fh$ where $h$ consists of the factors of $F\onL$ that
vanish on $V(I)\onL$ and $f$ consists of the other factors.  Finally
assume that $f\in\cI$.

Let $h'$ denote the reduced form of $h$. Denote by $k$ the minimal
multiplicity of a factor of $h$. Let $K$ be the maximal multiplicity
of each factor of $h$, so that $h$ divides $(h')^K$. $K$ can be
effectively bounded from above. Indeed, let us take a generic point
$q$ of $h=0$, and let $\ell$ be a generic linear function on $X$
vanishing at $q$. Then $K=\mult_q\<h,\ell\>$, and multiplicity of this
isolated intersection can be bounded from above either using the
considerations above, or (with better bound) using the main result of
\cite{Gab:vf}.

Note that the same upper bound holds for the total number of branches
of $h$, counted with multiplicities: one should take $p$ instead of
$q$.

Given the conditions above, we define a new pair $(J,\cJ)$ by letting
\begin{eqnarray}
  J&=&\<I, \{V_1^{\alpha_1}V_2^{\alpha_2}F \mid \alpha_1+\alpha_2=k\}  \> \\
  \cJ&=&\<\cJ,h'\>.
\end{eqnarray}
We claim that $(J,\cJ)$ is a controllable extension of $(I,\cI)$.

First, to prove that this is a pair it suffices to show that all
derivatives of order $k$ of $F\onL$ belong to $\cJ$.  This follows
from the Leibnitz rule,
\begin{multline}
  \qquad V_1^{\alpha_1}V_2^{\alpha_2}F\onL = V_1^{\alpha_1}V_2^{\alpha_2}(fh) = \\
  = (\text{derivatives of order $\le k-1$ of
    h})\cdot(\cdots)+(\cdots)f \qquad
\end{multline}
since $f\in\cI\subset\cJ$ and derivatives of order $\le k-1$ of $h$
are divisible by $h'\in\cJ$.

It is clear that the complexity of $J$ can be estimated in terms of
the complexity of $I$. It remains to show that the multiplicity of
$\cI$ can be estimated in terms of the multiplicity of $\cJ$. For this
it will suffice to show that for some explicit number $N$, we have
$(h')^N\in\cI$. Since $(h')^K$ is divisible by $h$, it will suffice to
find $N$ such that $h^N\in\cI$.  We in fact have a stronger statement.
Lemma~\ref{lem:extension-lemma} from the Appendix shows that there
exists a global analytic function $H$ such that $V(I)\subset V(H)$ and
$H\onL$ divides $=h^n$ for $n=2^K$. Since $I$ is radical, $H\in I_{\text{an}}$ and certainly
$h^n=H\onL\in I\onL\subset\cI$ ($ I_{\text{an}}$ is the ideal generated by $I$ in the ring of germs at $p$ of functions holomorphic near $p$).

To end this section we remark that crucially, $V(J)\subsetneq V(I)$.
Indeed, the derivatives of order $k$ of $F$ added to $I$ insure that
the lowest order factors of $h$ are not contained in $V(J)$.

\section{Intersection Multiplicities}

In this section we study the intersection multiplicities of the
restrictions of polynomial functions to the leaf $\cL$. In the first
subsection we consider the case of isolated intersection
multiplicities, and in the following subsection we extend the result
to the case of non-isolated intersections.

\subsection{Isolated intersection multiplicities}

Let $I\subset\cO(X)$ be a polynomial ideal. We will be particularly
interested in the case that $I$ is generated by two polynomial
functions $F,G$.

We will say that the intersection of $I$ (or $V(I)$) and $\cF$ is
isolated at a point $p\in\V(I)$ if there exists an open neighborhood
$U$ of $p$ such that $U\cap\V(I)\cap\cL=\{p\}$, where $\cL$ is the
leaf of $\cF$. In this case, we are interested in bounding
$\mult I\onL$.

\begin{Prop} \label{prop:good-poisson} Let $I$ be a radical ideal and
  suppose that it has isolated intersections with $\cF$.  Then there
  exist $F,G\in I$ of bounded complexity such that $\V(\<I,\{F,G\}\>)\subsetneq\V(I)$.
\end{Prop}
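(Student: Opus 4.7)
The proof rests on the identity
\[
\{F,G\}(q)=\det\begin{pmatrix} V_1F(q) & V_2F(q) \\ V_1G(q) & V_2G(q) \end{pmatrix},
\]
so that $\{F,G\}(q)\neq 0$ exactly when the restrictions of $dF(q),dG(q)$ to the foliation tangent plane $\operatorname{span}(V_1(q),V_2(q))$ are linearly independent. At a smooth point $q$ of an irreducible component $W\subset V(I)$, the Jacobian criterion (since $I$ is radical) implies that the differentials of elements of $I$ span the conormal space $N^*_qW$, so the existence of $F,G\in I$ with $\{F,G\}(q)\neq 0$ is equivalent to the surjectivity of the restriction $N^*_qW\to\operatorname{span}(V_1(q),V_2(q))^*$, i.e.\ to the transversality $T_qW\cap\operatorname{span}(V_1(q),V_2(q))=\{0\}$.

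The central step is therefore to produce, on each irreducible component $W$ of $V(I)$, a smooth point at which this transversality holds. Suppose, for contradiction, that the bad locus $\{q\in W_{\rm sm}: T_qW\cap\operatorname{span}(V_1(q),V_2(q))\neq\{0\}\}$ contains a Zariski open subset $U\subset W$ on which $V_1,V_2$ are linearly independent. Then on $U$ the assignment $q\mapsto T_qW\cap\operatorname{span}(V_1(q),V_2(q))$ is a holomorphic distribution of positive rank sitting inside both $TW$ and $\operatorname{span}(V_1,V_2)$. Integrating it---directly via the ODE theorem for a line field, or by Frobenius using $[V_1,V_2]=0$ in the rank $2$ case---produces through each point of $U$ a positive-dimensional analytic germ lying simultaneously in $W$ and in a leaf of $\cF$. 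This contradicts the hypothesis that $V(I)$ has isolated intersections with the leaves of $\cF$.

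Given generators $P_1,\ldots,P_m$ of $I$ of bounded degree, I then search for $F=\sum a_iP_i$, $G=\sum b_iP_i$. At each transversal smooth point $q_W$ supplied by the previous step, the Jacobian criterion together with transversality forces the $2\times m$ matrix $\bigl(V_\beta P_i(q_W)\bigr)_{\beta=1,2;\,i=1,\ldots,m}$ to have rank $2$, so $\{F,G\}(q_W)\neq 0$ is a non-empty Zariski open condition on $(a,b)\in\C^{2m}$. As $V(I)$ has finitely many irreducible components, intersecting one such open condition per component still gives a non-empty set, and any $(a,b)$ in it yields $F,G\in I$ with $\{F,G\}$ not vanishing identically on any component of $V(I)$. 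Hence $V(\langle I,\{F,G\}\rangle)\subsetneq V(I)$, and $\deg F,\deg G\le\max_i\deg P_i$ provides the complexity bound. I expect the main obstacle to be the integrability step: one must verify carefully in the analytic category that a holomorphic distribution inside $TW\cap\operatorname{span}(V_1,V_2)$ really does integrate to positive-dimensional germs lying both in $W$ (automatic since the distribution sits in $TW$) and in a leaf of $\cF$ (from tangency to $\operatorname{span}(V_1,V_2)$ together with $[V_1,V_2]=0$).
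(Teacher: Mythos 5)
Your proposal is correct and follows essentially the same route as the paper: reduce to showing generic transversality of $T_qV(I)$ with the leaf tangent plane on each component, argue by contradiction that failure of transversality on an open set yields an integrable line field (or rank-two distribution) producing a positive-dimensional germ of $V(I)\cap\cL$, and then take $F,G$ to be generic linear combinations of the generators. Your treatment is somewhat more careful than the paper's (which assumes $V(I)$ irreducible without loss of generality and does not spell out the Frobenius/rank-two case or the genericity of the linear combinations), but the underlying argument is the same.
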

\begin{proof}
  We may assume without loss of generality that $\V(I)$ is
  irreducible. Since the condition of having an isolated intersection
  is open, the generic point of $\V(I)$ is an isolated intersection.

  We claim that at a generic point $p$ of $\V(I)$, $T_pV(I)\trans
  \cF$.
  Indeed, otherwise the intersection of these spaces defines a line
  field whose integral trajectories lie in the intersection
  $\V(I)\cap\cL$ where $\cL$ is the leaf containing $p$, in
  contradiction to the assumption that $p$ is an isolated point of
  intersection.

  The tangent space $T_p\V(I)$ is defined by differentials of
  functions in $I$. By transversality, there exist $F,G\in I$ such
  that $\d F\onL,\d G\onL$ are linearly independent at the point
  $p$. One can assume that $F,G$ are linear combinations of generators of $I$. Thus $\{F,G\}$ is non-vanishing at $p$ and the claim is proved.
\end{proof}

\begin{Cor} \label{cor:isolated-inclusion} For any pair $(I,\cI)$
  there exists an controllable inclusion $(I,\cI)\subset(J,\cJ)$ such
  that $J$ is radical and $V(J)$ is the variety of non-isolated intersections between $I$
  and $\cF$.
\end{Cor}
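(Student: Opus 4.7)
The plan is to reach the desired pair $(J,\cJ)$ by iterating the three controllable extensions of Section~3. At each iteration we will strictly shrink $V(I)$ while preserving every non-isolated intersection point of the original $V(I)\cap\cL$; after finitely many steps the remaining variety will be exactly the non-isolated locus.

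First I would apply a radical extension to make $I$ radical. If every irreducible component of $V(I)$ already has generically non-isolated intersection with $\cF$ we stop. Otherwise pick an irreducible component $V_i\subset V(I)$ whose generic intersection with $\cF$ is isolated. Exactly as in the proof of Proposition~\ref{prop:good-poisson}, at a generic smooth point $p\in V_i$ the tangent space $T_pV_i$ is transverse to $T_p\cL$, and since differentials of functions in $I$ span the conormal to $V_i$ at $p$, one finds $F,G\in I$ (which may be taken as linear combinations of generators of $I$, hence of controlled complexity) with $\d F\onL,\d G\onL$ linearly independent at $p$; equivalently $\{F,G\}(p)\neq 0$. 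Applying the Poisson extension attached to this pair followed by another radical extension produces $(I_1,\cI_1)$ with $V(I_1)\subsetneq V(I)$, since $V_i\not\subset V(I_1)$.

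The crucial point is that every non-isolated intersection point of the original $V(I)$ survives every Poisson extension of this form. Indeed, suppose $p\in V(I)\cap\cL$ is non-isolated; then a local analytic branch of $V(I)\cap\cL$ through $p$ is a one-dimensional curve $\gamma$. For any $F,G\in I$ both $F$ and $G$ vanish on $\gamma$, so both $\d F\onL(p)$ and $\d G\onL(p)$ annihilate the tangent direction to $\gamma$. Since $T_p\cL$ is two-dimensional, these two covectors are parallel, and therefore $\{F,G\}(p)=(V_1F\cdot V_2G-V_2F\cdot V_1G)(p)=0$. So each Poisson extension we apply merely adds a function $\{F,G\}$ that already vanishes on the non-isolated locus, while radical extensions do not alter the zero set at all.

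Iterating, we obtain a strictly decreasing chain $V(I)\supsetneq V(I_1)\supsetneq V(I_2)\supsetneq\cdots$ of algebraic subvarieties of $V(I)$, which must terminate by the Noetherian property, and the number of iterations admits an effective bound in terms of the degrees of the generators of $I$. At termination every irreducible component of the final $V(J)$ has generically non-isolated intersection with $\cF$, while every non-isolated intersection point of $V(I)$ is preserved, so $V(J)$ coincides with the variety of non-isolated intersections between $I$ and $\cF$. Since each step is a controllable inclusion, Proposition~\ref{prop:inclusion-sequence} shows the composed inclusion $(I,\cI)\subset(J,\cJ)$ is controllable. The main obstacle I expect is the preservation claim --- verifying that $\{F,G\}$ vanishes at \emph{every} non-isolated intersection point and not merely at generic smooth points of non-isolated components --- together with pinning down an effective bound on the length of the Noetherian descent so that the composed inclusion remains controllable.
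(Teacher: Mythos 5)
Your proposal is correct and follows essentially the same route as the paper: an alternating sequence of radical and Poisson extensions built from the transversality argument of Proposition~\ref{prop:good-poisson}, terminating when only non-isolated intersections remain, with controllability from Proposition~\ref{prop:inclusion-sequence}. The only differences are cosmetic --- you descend component-by-component (bounding the number of steps via degrees) rather than by dimension as in the paper, and you make explicit the preservation argument (that $\{F,G\}$ vanishes at every non-isolated intersection point because $\d F\onL$ and $\d G\onL$ both annihilate the tangent direction of a curve branch in the two-dimensional leaf) which the paper leaves implicit.
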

\begin{proof}
  We obtain this inclusion using Proposition~\ref{prop:good-poisson}
  by forming an alternating sequence of controllable inclusions of
  radical and Poisson type. Each pair of inclusions in this sequence
  reduces the dimension of the set of isolated intersections, so after
  at most $n$ steps the sequence stabilizes on the variety of
  non-isolated intersections.
\end{proof}

Suppose now that $I=\<F,G\>$ and $p$ is an isolated intersection point
of $I$ and $\cF$. Consider the pair $(I,I\onL)$. Applying
Corollary~\ref{cor:isolated-inclusion} and
Proposition~\ref{prop:inclusion-mult-bound} we immediately obtain an
upper bound for $\mult_p I\onL$.

\subsection{Non-isolated intersection multiplicities}
\label{subsec:non-isolated-mult}

We now consider the case of non-isolated intersection multiplicities.
Let $I=\<F,G\>$ and suppose that the intersection is not isolated on
$\cL$ at a point $p$.

Near $p$ we may write
\begin{eqnarray}
  F\onL & = & h_f f \\
  G\onL & = & h_g g
\end{eqnarray}
where $h_f,h_g$ are the factors which are common to $F\onL$ and
$G\onL$ (note that they may appear with different multiplicities). We
stress that $h_{f,g},f,g$ are defined only on $\cL$, and the
decomposition of $F$ and $G$ as products of these factors need not
extend outside of $\cL$. We let
\begin{equation}
  \cI = \<f,g\>.
\end{equation}

 \begin{Thm}\label{thm:main}
 The multiplicity $\mult_p\cI$ can be bounded from
 above in terms of degrees of $V_1,V_2,F,G$ and the dimension $n$.
\end{Thm}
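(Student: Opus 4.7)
The strategy is to apply the controllable extension machinery of Section~3 iteratively to the initial pair $(I,\cI)=(\langle F,G\rangle,\langle f,g\rangle)$, producing a sequence of controllable extensions that terminates in a pair $(J,\cJ)$ with $p\notin V(J)$; Proposition~\ref{prop:inclusion-mult-bound} then yields the bound on $\mult_p\cI$. First, $(I,\cI)$ is a Noetherian pair, since $F|_\cL=h_f f\in\langle f,g\rangle=\cI$ and similarly $G|_\cL\in\cI$.

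The iterative step runs as follows. Given an extension $(I_k,\cI_k)$ of $(I,\cI)$ with $p\in V(I_k)$, first apply Corollary~\ref{cor:isolated-inclusion} to obtain a controllable extension $(I_k',\cI_k')$ with $I_k'$ radical and $V(I_k')$ the variety of non-isolated intersections of $V(I_k)$ with $\cF$. If $p\notin V(I_k')$, stop. Otherwise, apply the Jacobian extension of Subsection~\ref{subsec:jacobian-extension} to the polynomial $F\in I\subset I_k'$. To meet its hypotheses, regroup $F|_\cL=h_f\cdot f$ as $F|_\cL=h^{(k)}\cdot f^{(k)}$, where $h^{(k)}$ collects precisely those irreducible factors of $h_f$ that still vanish on $V(I_k')|_\cL$, and $f^{(k)}=f\cdot(h_f/h^{(k)})$ absorbs the remaining factors into $f$. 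Since $f\in\cI\subset\cI_k'$, we have $f^{(k)}\in\cI_k'$, while $h^{(k)}$ vanishes on $V(I_k')|_\cL$ by construction. The resulting extension $(I_{k+1},\cI_{k+1})$ then satisfies $V(I_{k+1})\subsetneq V(I_k')$.

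Since each iteration strictly shrinks $V$, and an effective length bound for such descending chains of affine varieties is available in terms of the degrees of the defining equations, the process terminates after an explicitly bounded number of steps at a pair $(J,\cJ)$ with $p\notin V(J)$. Proposition~\ref{prop:inclusion-mult-bound} then produces the required bound on $\mult_p\cI$.

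The main obstacle I anticipate is the complexity bookkeeping across the iteration: each Jacobian phase adds iterated Lie derivatives $V_1^{\alpha_1}V_2^{\alpha_2}F$ of order $k\le K$, whose degrees grow with $\deg V_1$ and $\deg V_2$; each radical phase invokes an effective Nullstellensatz; and the exponent $2^K$ from the extension lemma of the Appendix enters each multiplicity pull-back. One must verify that the composition of these polynomial-in-the-data bounds through the full iteration remains a function only of $n,\deg V_1,\deg V_2,\deg F,\deg G$. Each individual ingredient is explicitly effective, so the composition is in principle straightforward, though the resulting bound is likely astronomical.
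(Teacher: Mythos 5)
Your proposal reproduces the paper's argument almost exactly: the same alternating iteration of Corollary~\ref{cor:isolated-inclusion} and the Jacobian extension, the same regrouping $F\onL=h^{(k)}\cdot f^{(k)}$ with $f^{(k)}=f\cdot(h_f/h^{(k)})\in\cI$, and the same appeal to Propositions~\ref{prop:inclusion-sequence} and~\ref{prop:inclusion-mult-bound} at the end. The one place where you genuinely diverge is the termination argument, and that is also the one place where your version has a gap.

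You terminate the iteration by invoking ``an effective length bound for descending chains of affine varieties in terms of the degrees of the defining equations.'' As stated this is circular: the degrees of the generators of $I_k$ grow at every step (Lie derivatives in the Jacobian phase, effective Nullstellensatz in the radical phase), so any chain-length bound expressed through those degrees already presupposes a bound on the number of steps. Moreover you do not actually need the global chain to stabilize --- you only need $p$ to leave $V(I_k)$. The paper breaks the circle by a local count at $p$: each Jacobian extension adds the derivatives $V_1^{\alpha_1}V_2^{\alpha_2}F$ of order $k$ equal to the \emph{minimal} multiplicity of a factor of the current $h$, which forces the lowest-order branches of $h$ out of $V(J)$ (the remark closing subsection~\ref{subsec:jacobian-extension}). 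Hence each round of the iteration strictly decreases the number of branches of $h_f$ through $p$ that survive in $V(I_{2k+1})\onL$, and after $K$ rounds, where $K$ is the number of branches of $h_f$ at $p$ counted with multiplicity, one has $p\notin V(I_{2K+1})$. Crucially, $K$ itself is bounded a priori by an \emph{isolated} intersection multiplicity ($\mult_p\langle h,\ell\rangle$ for a generic linear $\ell$), which is already controlled by the first part of the paper or by \cite{Gab:vf}. You should replace your chain-length appeal with this branch-counting argument; the rest of your proof, including the complexity bookkeeping you flag at the end, then goes through as in the paper.
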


We begin in the same manner as in the isolated-intersection case.

Let $(I_0,\cI_0)=(I,\cI)$. We apply
Corollary~\ref{cor:isolated-inclusion} to obtain an controllable
inclusion $(I_0,\cI_0)\subset(I_1,\cI_1)$ such that $I_1$ is radical and  $V(I_1)$ is the
variety of non-isolated intersections between $I$ and $\cF$.

We are now in position to apply the Jacobian extension controllable
inclusion. Indeed, $V(I_1)$ is the locus of non-isolated
intersections, and with $h_1=h_f$ we have:
\begin{itemize}
\item $V(I_1)\onL=\{h_1=0\}$.
\item $F\onL=h_1 f$.
\item $f\in \cI_0\subset\cI_1$.
\end{itemize}
We thus obtain an inclusion $(I_1,\cI_1)\subset(I_2,\cI_2)$, and as
remarked at the end of subsection~\ref{subsec:jacobian-extension},
$\V(I_2)\subsetneq V(I_1)$.

Applying Corollary~\ref{cor:isolated-inclusion} again, we obtain an
controllable inclusion $(I_2,\cI_2)\subset(I_3,\cI_3)$ such that $I_3$ is radical and
$V(I_3)$ is the variety of non-isolated intersections between $I_2$
and $\cF$.

We are now again in position to apply the Jacobian extension
controllable inclusion. Indeed, $V(I_3)$ has only non-isolated
intersections with $\cF$, and letting $h_3$ denote the factors of
$h_f$ which remain in $V(I_3)\onL$ we have:
\begin{itemize}
\item $V(I_3)\onL=\{h_3=0\}$.
\item $F\onL=h_3 f\cdot(h_f/h_3)$.
\item $f\cdot(h_f/h_3)\in \cI_0\subset\cI_1$.
\end{itemize}
We thus obtain an inclusion $(I_3,\cI_3)\subset(I_4,\cI_4)$, and as
remarked at the end of subsection~\ref{subsec:jacobian-extension},
$\V(I_4)\subsetneq V(I_3)$.

Repeating these two alternating types of controllable inclusions and
applying Proposition~\ref{prop:inclusion-sequence} we obtain an
inclusion $(I_0,\cI_0)\subset(I_{2k+1},\cI_{2k+1})$ where at least $k$
factors of $h_f$ have been removed from $V(I_{2k+1})$.  In particular,
for $K$ equal to the number of factors of $h_f$, we know that
$p\not\in V(I_{2K+1})$. Thus, applying
Proposition~\ref{prop:inclusion-mult-bound} we obtain the required
upper bound for $\mult_p\cI_0=\mult_p\cI$.

\section{Appendix}

In this appendix we prove the lemma used in
subsection~\ref{subsec:jacobian-extension} which may be of some
independent interest.

\begin{Lem} \label{lem:extension-lemma} Let $I\subset\cO(X)$ be an
  ideal and $F\in I$, and suppose that every intersection of $I$ and
  $\cF$ is non-isolated. Write $F\onL=fh$ where $h$ consists of the
  factors of $F\onL$ that vanish on $V(I)\onL$ and $f$ consists of the
  other factors. Finally denote by $\mu$ the multiplicity of $h$ at
  $p$.

  Then there exists a function $H\in\cO_{\text{an}}(X)$ such that
  $H\onL$ divides $h^{2^\mu}$ and $V(I)\subset V(H)$.
\end{Lem}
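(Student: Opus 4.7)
The argument is by induction on $\mu = \mult_p h$.

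\emph{Base case.} If $\mu = 0$, then $h$ is a unit at $p$, so no branch of $V(I)\cap\cL$ passes through $p$. Combined with the hypothesis that every intersection of $I$ with $\cF$ is non-isolated, this forces $p\notin V(I)$ as a germ at $p$. The germ $V(I)$ is therefore empty near $p$ and $H\equiv 1$ works.

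\emph{Inductive step.} Decompose the germ of $V(I)$ at $p$ into irreducible analytic components $V(I)=W_1\cup\cdots\cup W_s$. By the non-isolated hypothesis, each $W_j$ through $p$ meets $\cL$ in a positive-dimensional germ. I would construct, for each $W_j$, a function $H_j\in\cO_{\text{an}}(X)$ with $W_j\subset V(H_j)$ and $H_j\onL$ dividing a controlled power of $h$, and then set $H=\prod_j H_j$. Since $V(\prod_j H_j)=\bigcup_j V(H_j)\supset\bigcup_j W_j=V(I)$, this yields the desired containment, while the restrictions multiply into a divisor of $h^{2^\mu}$.

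For a hypersurface component $W_j$, take $H_j$ to be a reduced defining function. Since $F\in I$ vanishes on $W_j$ and $H_j$ is reduced, $F=H_j\cdot G_j$ for some analytic $G_j$, and restriction to $\cL$ gives $fh=H_j\onL\cdot G_j\onL$. Because $H_j\onL$ vanishes only on $W_j\cap\cL\subset V(I)\cap\cL$, its irreducible factors lie among the $h_i$; and since $f$ is coprime to every $h_i$, the divisibility $H_j\onL\mid fh$ forces $H_j\onL\mid h$. So here a single power suffices, with no loss.

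For a higher-codimension component $W_j$ the plan is to reduce to the hypersurface situation. A single generator $g_j$ of the analytic ideal of $W_j$ defines a hypersurface $V(g_j)\supsetneq W_j$, and $g_j\onL$ typically carries extraneous factors outside $\{h=0\}$. These must be cancelled by applying the inductive hypothesis to an auxiliary data set of strictly smaller multiplicity (obtained, for instance, by peeling off one irreducible factor of $h$ from the pair $(I,F)$ and working in the resulting quotient), producing a correction whose contribution doubles the exponent — this is the origin of the factor $2^\mu$.

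\emph{Main obstacle.} The principal difficulty is the higher-codimension case. For a component of codimension at least two, no single analytic function cuts it out, so a generic choice of defining function introduces extraneous branches in its restriction to $\cL$. These must be eliminated by an iterative procedure, and the exponential factor $2^\mu$ strongly suggests a scheme in which each reduction of $\mu$ by one costs a doubling. Verifying that this bookkeeping closes up correctly, and that the construction lifts from $\cL$ back to $X$ analytically while preserving the required divisibility, constitutes the crux of the proof.
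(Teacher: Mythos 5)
There is a genuine gap, and you have in fact flagged it yourself: the entire content of the lemma lies in the components of $V(I)$ of codimension at least two, and for these your proposal offers only a plan, not an argument. Note that this case is not exceptional but typical: since the leaves are two-dimensional, a component of $V(I)$ meeting $\cL$ in a curve has codimension $n-2$ or more in $X$ (for instance, $V(I)$ could be a single branch of $\{h=0\}$ lying inside $\cL$ itself when $n\ge 3$), so the hypersurface case you do handle is essentially vacuous for the problem at hand. The proposed mechanism for the hard case does not work as described: once you pick a single generator $g_j$ whose restriction $g_j\onL$ carries extraneous factors outside $\{h=0\}$, no amount of multiplying by further analytic ``corrections'' can remove those factors, and dividing them out produces a function that has no reason to extend analytically from $\cL$ to a neighborhood in $X$ --- which is precisely the difficulty the lemma is about. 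Likewise, ``peeling off one irreducible factor of $h$ from the pair $(I,F)$ and working in the resulting quotient'' is not a well-defined induction step: the factorization $F\onL=fh$ exists only on $\cL$, so removing a factor of $h$ does not yield a new pair $(I',F')$ on $X$ to which the inductive hypothesis could be applied.

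The paper's proof proceeds quite differently and never decomposes $V(I)$ into components. It chooses coordinates $(x,y,z)$ with $\cL=\{z=0\}$ so that $\{F=0\}$ is a ramified cover over $(x,z)$, parametrizes the branches of $F\onL$ over an annulus $A_x$ by a multiset $B=B_h\amalg B_f$, and for \emph{every} subset $S\subset B_h$ that is monodromic on $\cL$ forms $F_S(x,y,z)=\prod_{b\in S}(y-y_b(x,z))$, then discards the negative and fractional Puiseux terms to obtain a function $F'_S$ holomorphic in $(x,y)$. The key point is that one does not know in advance which branches $V(I)$ occupies on a nearby leaf $\cL_{z_0}$ --- only that they form some monodromic subset of $B_h$ --- so one takes $H=\prod_S F'_S$ over \emph{all} monodromic subsets $S\subset B_h$; whichever subset occurs, the corresponding factor vanishes on $V(I)\rest{\cL_{z_0}}$, and the $z$-monodromy merely permutes the factors, making $H$ single-valued. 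This also explains the exponent: $2^\mu$ bounds the number of subsets of $B_h$ (each $F'_S\onL=F_S\onL$ divides $h$), not a doubling incurred at each step of an induction on $\mu$, which is the mechanism you guessed. So your base case and hypersurface observations are fine but peripheral; the crux you identify as missing is exactly what remains to be supplied, and the paper supplies it by this global-in-$S$ product construction rather than by induction on $\mu$.
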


\begin{proof}
  Let $(x,y)$ denote a system of coordinates on $\cL$, and $(z)$
  denote the coordinates parameterizing the leafs, with $p$ being the
  origin and $\cL=\{z=0\}$.  Possibly making a linear change of
  coordinates in $(x,y)$, we may assume that the projection
  $\pi:(x,y,z)\to(x,z)$ restricted to $\{F=0\}$ defines a
  \emph{ramified} covering map in a neighborhood of the origin.

  Fix an annulus $A_x$ around the origin in $(x)$ and some arbitrary
  point $x_0\in A_x$, and a sufficiently small disc $D_y$ in
  $(y)$. Then the fibre of $\pi$ restricted to $\{F=0\}\cap\cL$ is a
  discrete set $B$ parametrizing the branches of $F$ on $\cL$. In case
  $F$ has repeated factors, we view $B$ as a multiset with the
  appropriate multiplicities.  Finally we write $B$ as the disjoint
  union of branches corresponding to $h$ and $f$, $B=B_h\amalg B_f$.

  On $\cL$, one may express the branches of $\{F=0\}$ as ramified
  functions $y_b(x), b\in B$ defined on $A_x$. Furthermore, for $z$ in
  a sufficiently small polydisc $D_z$, these functions extends as
  holomorphic functions $y_b(x,z)$, possibly ramified over a
  ramification locus $\Sigma_z\subset D_z$. We note that since the
  different branches of $F$ remain far apart in $A_x$ (on $\cL$, and
  hence also for sufficiently small $z$), the only ramification in $z$
  occurs when $B$ is a multi-set. In this case, several branches
  corresponding to the same branch on $\cL$ may be permuted by the
  $z$-monodromy.

  We will call a set $S\subset B$ monodromic if it is invariant as a
  multi-set under the monodromy of $A_x$ on $\cL$, in the sense that
  after applying the monodromy, each branch appears with the same
  multiplicity as it originally did. We stress that this is only a
  condition on $\cL$, and does not imply that the corresponding set
  $\{y_b:b\in S\}$ is monodromic for non-zero values of $z$. For
  $z\neq0$, the $A_x$ monodromy may replace one branch by another, as
  long as the two branches correspond to the same branch on $\cL$.

  Assume now that $S$ in monodromic and $S\subset B_h$, and consider
  the function
  \begin{equation}
    F_S(x,y,z) = \prod_{y_b:b\in S} (y-y_b(x,z)).
  \end{equation}
  On $\cL$, the functions $y_b$ actually extend from $A_x$ to the
  punctured disc of the same radius, as the only ramification point
  occurs at the origin. Furthermore, since $S$ is assumed to be
  monodromic on $\cL$, it follows that $F_S\onL$ is univalued and
  holomorphic outside the origin.  Since it is bounded, it extends
  holomorphically at the origin well. Since $S\subset B_h$, we see
  that $F_S\onL$ divides $h$.

  On nearby leafs, the set $S$ need no longer be monodromic even in
  $A_x$, and the function $F_S$ is not necessarily univalued. However,
  for each fixed value of $z$ it is a well-defined function in $A_x$.
  We develop $F_S$ as a Puiseux series in $A_x$ and let $F'_S$ denote
  the holomorphic part obtained by removing all negative and
  fractional terms.  The result is a holomorphic function in
  $(x,y,z)$, possibly ramified over $\Sigma_z$ but unramified in
  $(x,y)$.

  On nearby leafs, the functions $y_b$ may exhibit several
  ramification points (the ramification at the origin on $\cL$ may
  bifurcate into several ramification points). But suppose that for a
  certain nearby leaf $\cL_{z_0}$ the set $S$ happens to be monodromic
  (as a multiset) with respect to the full monodromy in the $(x)$
  variable. Then, arguing just as we did for the leaf $\cL$, we deduce
  that $F_S$ is in fact holomorphic on $\cL_{z_0}$. Hence for such
  leafs we have $F_S\rest{\cL_{z_0}}\equiv F'_S\rest{\cL_{z_0}}$.

  Now define the function $H$ as follows,
  \begin{equation}
    H = \prod_{\text{$S\subset B_h$ monodromic}} F'_S.
  \end{equation}
  We claim that $H$ is in fact an unramified holomorphic
  function. Indeed, we have already seen that each of the factors
  $F'_S$ is holomorphic in $(x,y)$. Furthermore, the monodromy in the
  $z$ variables permutes the set $\{S:\text{$S\subset B_h$
    monodromic}\}$,
  and hence only permutes the factors in the product defining
  $H$. Thus $H$ is univalued, and since it is also bounded near its
  singular locus, it extends holomorphically there as well.

  We claim that $H$ vanishes on the $V(I)$ (at least in a sufficiently
  small neighborhood of the origin). Indeed, consider some fixed value
  of $z_0$ of $z$. By assumption, all intersections of $V(I)$ with
  $\cL_{z_0}$ are non-isolated, so $V(I)\rest{\cL_{z_0}}$ necessarily
  consists of some set $S_{z_0}$ of branches $y_b$. This set, being
  the set of branches of an analytic set on $\cL_{z_0}$, is
  necessarily monodromic. This implies that it is monodromic on $\cL$
  as a multiset as well.

  We claim that for sufficiently small $z_0$, $S_{z_0}\subset B_h$.
  Otherwise $V(I)\rest{\cL_{z_0}}$ contains branches from $B_f$ for
  arbitrarily small valued of $z_0$, and since $V(I)$ is closed we see
  that $V(I)\onL$ contains branches of $f=0$, in contradiciton to the
  assumptions of the lemma.

  To conclude, $S$ is monodromic and $S\subset B_h$, and hence as we
  have seen $F_S\rest{\cL_{z_0}}\equiv F'_S\rest{\cL_{z_0}}$. Since
  $F_S$ vanishes on $V(I)\rest{\cL_{z_0}}$ by definition, and $F'_S$
  is a factor of $H$, we deduce that $H$ vanishes on
  $V(I)\rest{\cL_{z_0}}$. Since this is true for any sufficiently
  small $z_0$, the claim is proved.

  Finally, on $\cL$ we have shown that for monodromic $S\subset B_h$,
  $F'_S\equiv F_S$ and $F_S\onL$ divides $h$. Since $H$ is the product
  over at most $2^\mu$ sets of the factors $F'_S$, we deduce that
  $H\onL$ divides $h^{2^\mu}$, concluding the proof of the lemma.
\end{proof}

\bibliographystyle{plain}

\end{document}